\documentclass[12pt]{article}
\usepackage{latexsym}
\usepackage{amsmath}
\usepackage{amssymb}
\usepackage{amsthm}
\usepackage{mathtools}
\usepackage{dsfont}
\usepackage{bbm}
\usepackage{color}
\usepackage[tmargin=1.25in,bmargin=1.25in,lmargin=1.1in,rmargin=1.1in]{geometry}

\usepackage{graphicx}
\usepackage{float}
\usepackage{subcaption}
\usepackage{epsfig}
\usepackage{hyperref} 

\definecolor{Red}{rgb}{1,0,0}
\definecolor{halfgray}{gray}{0.55}
\definecolor{webgreen}{rgb}{0,.5,0}
\definecolor{webbrown}{rgb}{.6,0,0}
\definecolor{Maroon}{cmyk}{0, 0.87, 0.68, 0.32}
\definecolor{RoyalBlue}{cmyk}{1, 0.50, 0, 0}
\definecolor{Black}{cmyk}{0, 0, 0, 0}

\hypersetup{
    colorlinks=true, linktocpage=true, pdfstartpage=1, pdfstartview=FitV,
    breaklinks=true, pdfpagemode=UseNone, pageanchor=true, pdfpagemode=UseOutlines,
    plainpages=false, bookmarksnumbered, bookmarksopen=true, bookmarksopenlevel=1,
    hypertexnames=true, pdfhighlight=/O,
    urlcolor=webbrown, linkcolor=RoyalBlue, citecolor=webgreen,
}

\newtheorem{thm}{Theorem}[section]
\newtheorem{proposition}[thm]{Proposition}
\newtheorem{cor}[thm]{Corollary}
\newtheorem{lem}[thm]{Lemma}

\makeatletter\@addtoreset{equation}{section}\makeatother

\begin{document}

\newcommand{\D}{\mathrm{d}}
\newcommand{\R}{\mathbb{R}}
\newcommand{\EE}{\mathbb{E}}
\newcommand{\PP}{\mathbb{P}}
\newcommand{\NN}{\mathbb{N}}
\newcommand{\ZZ}{\mathbb{Z}}

\newcommand{\var}{\operatorname{Var}}
\newcommand{\cov}{\operatorname{Cov}}
\newcommand{\G}{\mathcal{G}}
\newcommand{\B}{\mathcal{B}}
\newcommand{\e}{\mathrm{e}}
\newcommand{\p}{\mathrm{P}}
\newcommand{\N}{\mathbb{N}}
\newcommand{\K}{\mathbb{K}}
\newcommand{\T}{\mathbb{T}}
\newcommand{\bbP}{\mathbb{P}}
\newcommand{\bbE}{\mathbb{E}}
\newcommand{\bfP}{\mathbf{P}}
\newcommand{\bfE}{\mathbf{E}}
\newcommand{\calE}{\mathcal E}
\newcommand{\GW}{\operatorname{GW}}
\newcommand{\Tra}{\operatorname{Tra}}
\newcommand{\RW}{\operatorname{RW}}
\newcommand{\RWP}{\RW\times\bfP}
\newcommand{\F}{\mathcal F}
\newcommand{\bo}{\Bar{\omega}}
\newcommand{\Var}{\mathrm{Var}}

\pagenumbering{arabic}

\newcommand{\Peaug}{\mathrm{P}_\varepsilon^{\mathrm{aug}}}
\newcommand{\Pehat}{\hat{\mathrm{P}}_\varepsilon}
\newcommand{\PPehat}{\hat{\mathbb{P}}_\varepsilon}
\newcommand{\Eehat}{\hat{\mathrm{E}}_\varepsilon}
\newcommand{\EEehat}{\hat{\mathbb{E}}_\varepsilon}

\title{Random walk in a regenerating random environment}
\author{Raphael Martin, Jan Nagel} 
\maketitle

\begin{abstract} 
In this paper we consider random walks moving in a dynamic random environment that is completely resampled  with probability $p$ after each step of the random walker. This resampling yields a straightforward regeneration structure and limit theorems, such as a law of large numbers with a limiting speed $v(p)$ that depends on $p$. We investigate several properties of $v(p)$ as a function of $p$, such as differentiability and monotonicity. Similarily, we study the limiting diffusivity $\sigma^2(p)$ in a central limit theorem.  
\end{abstract}

{\bf Keywords:}
Random walk in random environment, dynamic environment, effective velocity

\smallskip

{\bf MSC 2020:} {
60K37, 
60F15, 
60K40 
}

\section{Introduction}
\label{sec:intro}

Random walks in dynamic random environments are an important variant of random walks in static random environments. Instead of sampling the transition probabilities of a random walk $(X_n)_{n\geq 0}$ once and then keeping this environment fixed as the process moves, the transition probabilities are subject to a random dynamic and the random walk $(X_n)_{n\geq 0}$ is then a time-inhomogeneous Markov chain. \cite{boldrighini1997almost} introduced an environment with i.i.d. updates in time and proved a central limit theorem for the position of the walker, which was extended later by \cite{boldrighini2004random,bandyopadhyay2005random}. Limit theorems in more general environment dynamics have been considered for example by \cite{avena2011law,Redig2013,Blondel2020Random}. A random walk in a dynamical percolation model was studied and compared with a static model in \cite{HermonSousi2020,peres2020mixing} and in \cite{andres2024biased}, the influence of a bias parameter on the random walk on dynamical percolation was investigated. Similar to the last reference, we want to understand how limit theorems depend on the model parameters. In particular, how does the rate of the environment dynamics influence the long term behavior of the random walk?    

Such questions require a very precise control of the path behavior, often with some uniformity in the rate of the environment dynamic. We therefore study the following variant of a dynamic environment: After each step of the random walk $(X_n)_{n\geq 0}$ we resample with probability $p$ the whole environment, that is, the transition probabilities at each site,  independently of each other and of the past. This is a variant of the random walk in cooling random environments, as introduced by \cite{avena2019random} and studied in \cite{avena2019randomergodic,avena2022random,da2024limiting,avena2025gaussian}. The main feature of cooling environments is that the whole environment is regenerated at a sequence of deterministic times, such that the time between regenerations diverges, whereas in our model these times are random and geometrically distributed.

Regenerating the whole environment gives a natural decomposition of the random walk path into independent increments, and thus an effective way to analyze asymptotic quantities, such as the effective speed of the random walk. In particular, we are able to state results for the speed as a function of the renewal parameter $p$. Let us emphasize that when $p=0$, the random walk moves in a static random environment. When $p=1$, the environment is resampled after each step, which results in a space-homogeneous random walk. Therefore, the behavior for $p$ close to zero or close to 1 are particular interesting, as they allow for a comparison between dynamic and static environments.

Before we present our results, we first state the precise definition of the model. Later, we will focus on a one-dimensional model in $\mathbb Z$, but there is no difference in introducing the process in $\mathbb Z^d$. 
Let $\omega = (\omega^{(k)})_{k\geq 0}$ be a sequence of environments $\omega^{(k)} = (\omega^{(k)}_x)_{x\in \mathbb Z^d}$, such that for all $k\geq 0$ and $x\in \mathbb Z^d$, $\omega_x^{(k)} = (\omega_x^{(k)}(y))_{y\in \mathbb Z^d}$ is a probability vector, that is, $\omega_x^{(k)}(y)\geq 0$ for all $y\in \mathbb Z^d$ and $\sum_{y}\omega_x^{(k)}(y)=1$. 
We denote the space of all such environments by $\Omega$. Given a strictly increasing integer sequence $\tau=(\tau_k)_{k\geq 0}$ of regeneration times with $\tau_0=0$, we define the time-inhomogeneous Markov chain $(X_n)_{n\geq 0}$ with $X_0=0$ and transition probabilities
\begin{align} \label{eq:transitionprob}
P_{\omega,\tau}(X_{n+1} = x+y | X_n=x) = \omega^{(k)}_x(y) ,
\end{align}  
when $\tau_{k}\leq n<\tau_{k+1}$. The probability measure $P_{\omega,\tau}$ can be viewed as a measure on the path space $\mathbb Z^d$ and is called the \emph{quenched law} of the random walk $(X_n)_{n\geq 0}$. Let $\mathrm{P}_p$ be a measure on the space of all environments $\omega$ and all regeneration sequences $\tau$, such that under $\mathrm{P}_p$, the sequence $\omega = (\omega^{(k)})_{k\geq 0}$ is i.i.d. and for all $k\geq 0$, $\omega^{(k)} = (\omega^{(k)}_x)_{x\in \mathbb Z^d}$ is an i.i.d. family, and $(\tau_{k+1}-\tau_k)_{k\geq 0}$ is an i.i.d. sequence independent of $\omega$, such that $\tau_1-\tau_0$ is geometrically distributed with parameter $p\in (0,1]$. 

The \emph{annealed law} $\PP_p$ of the random walk is then defined by 
\begin{align} \label{defannealedlaw}
\PP_p(A\times B\times C) 
= \int_{A\times B} P_{\omega,\tau}((X_n)_{n\geq 0}\in C) \, d\mathrm{P}_p(\omega,\tau) ,
\end{align}
for measurable sets $A\subset \Omega, B\subset  {\mathbb N_0}^{\mathbb N_0}, C\subset (\mathbb Z^d)^{\mathbb N_0}$ (as usual, the $\sigma$-algebras on those spaces are generated by finite-dimensional projections). Under $\PP_p$, $(X_n)_{n\geq 0}$ is a \emph{random walk in a dynamic random environment}.  

As described above, the regeneration of the environment at times $\tau_k$ is global, that is, all transition probabilities $\omega_x^{(k-1)}$ are replaced by the independent copies $\omega_x^{(k)}$. Therefore, the increments $X_{\tau_k}-X_{\tau_{k-1}}$, $k\geq 1$ are i.i.d. under $\PP_p$ and $\tau_k-\tau_{k-1}$, $k\geq 1$ are i.i.d. as well. 

Since we are interested in the precise change of limit theorems when we vary $p$, we focus in the following on the more accessible one-dimensional case. 
Let us assume that $\omega_x^{(k)}(y)=0$ if $|y|\neq 1$, that is, the random walk is nearest-neighbour. 
In the one-dimensional nearest-neighbour case we can simplify the notation and write $\omega^{(k)}_x$ for $\omega_x^{(k)}(1)$ and especially $\omega_x$ for $\omega_x^{(0)}(1)$.

The strong law of large numbers implies then that the limit 
\begin{align}
v(p) \coloneqq \lim_{n\to \infty} \frac{X_n}{n} 
\end{align}
exists $\PP_p$-almost surely. Indeed, due to the integrability of $\tau_k-\tau_{k-1}$ this limit may be taken along the sequence $\tau_k$, such that  
\begin{align}\label{speed1}
v(p) = \lim_{n\to \infty} \frac{X_{\tau_n}}{\tau_n}  = \lim_{n\to \infty} \frac{\tfrac{1}{n} \sum_{k=1}^n X_{\tau_k}-X_{\tau_{k-1}}}{\tfrac{1}{n} \sum_{k=1}^n {\tau_k}-{\tau_{k-1}}} = \frac{\EE_p[X_{\tau_1}]}{\EE_p[\tau_1]} = p \EE_p[X_{\tau_1}] ,  
\end{align}
$\PP_p$-almost surely, where the last identity follows from the geometric distribution of $\tau_1$. 

When $p=0$, we do not have a well-defined sequence of regeneration times, but we may still consider a random walk in an environment that is resampled with probability $p=0$. This is just a random walk is a static random environment, which is consistent with \eqref{eq:transitionprob}, setting $\tau_0=0$ and $\tau_k=\infty$ for $k\geq 1$. 
 
The corresponding annealed law $\PP_0$ is then defined analogously to \eqref{defannealedlaw}. The law of large numbers for random walk in a static random environment yields the $\PP_0$-almost sure limit
\begin{align*}
v(0) = \lim_{n\to \infty} \frac{X_n}{n} . 
\end{align*}
Let us emphasize that $v(0)$ is the classical speed of a random walk in a static random environment, see \cite{solomon1975random,zeitouni2004RWRE}. For later reference, it will be useful to consider, for any $p\in [0,1]$, a random walk $(Z_n)_{n\geq 0}$ in a static environment. 
Its quenched law may be defined by   
\begin{align}
P_{\omega,\tau}(Z_{n+1} = x+y | Z_n=x) = \omega_x ,
\end{align}  
for all $n\geq 0$, and with $Z_0=0$. Note that then expectations such as $\EE_p[Z_m]$ are actually independent of $p$.  

Besides a law of large numbers, the regeneration structure also implies a central limit theorem 
\begin{align}
\frac{1}{\sqrt{n}}(X_n-nv(p)) \xrightarrow[n\to\infty]{d} \mathcal N(0,\sigma^2(p)) ,  
\end{align}
where the limit is a normal distribution with variance
\begin{align}
\sigma^2(p) =  p \EE_p[(X_{\tau_1}-\tau_1v(p))^2] .
\end{align}
In the following, we study $v(p)$ and $\sigma^2(p)$ as a function of $p$.

\section{Main results}
\label{sec:results}

Our first result concerns the behavior of the speed $v(p)$ as a function of the renewal parameter $p$.

\begin{thm}\label{thm_mainspeed} 
For the effective speed $p\mapsto v(p)$ the following holds. 
\begin{enumerate}
\item $v$ is continuous on $[0,1]$. 
\item $v$ is infinitely often differentiable on $(0,1]$, with $k$-th derivative given by 
\begin{align}\label{vAbleitungen}
v^{(k)}(p) = (-1)^k\frac{k!}{p^{k-1}}\left(
\EE_p[Z_{N_{k-1}}]
- 2 \EE_p[Z_{N_{k}}]
+ \EE_p[Z_{N_{k+1}}]
\right),
\end{align}
where under $\PP_p$, the random variable $N_k$, $k\geq 1,$ follows a negative binomial distribution with parameters $k$ and $p$, independent of the walk and the environment, with $N_0=0$.

\item If $\EE_p[Z_1]>0$ and $  \Var(\omega_0)>0$, the speed has a strict local maximum in $p=1$.

\end{enumerate}
\end{thm}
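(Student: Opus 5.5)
The approach is to reduce everything to the static walk $(Z_n)$ and a power series in $q=1-p$. On $[0,\tau_1)$ the walk only sees $\omega^{(0)}$, and $\tau_1$ is independent of $\omega$. Hence under $\PP_p$ the variable $X_{\tau_1}$ has the law of $Z_{\tau_1}$ with $\tau_1\sim\mathrm{Geom}(p)$ independent of $(Z,\omega)$. Set $a_m=\EE_p[Z_m]$, which does not depend on $p$, and $d_j=a_{j+1}-a_j=\EE[2\omega_{Z_j}-1]\in[-1,1]$. Then \eqref{speed1} gives
\begin{align*}
v(p)=p\,\EE_p[Z_{\tau_1}]=p\sum_{j\ge 0} d_j\,\PP(\tau_1>j)=p\sum_{j\ge0} d_j (1-p)^j .
\end{align*}
Since the coefficients are bounded, this is a power series in $q=1-p$ with radius of convergence at least $1$. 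Therefore $v$ is real analytic, in particular $C^\infty$, for $p\in(0,2)$, which covers $(0,1]$ including the endpoint $p=1$.

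\emph{Continuity at $0$.} By the static law of large numbers, $Z_n/n\to v(0)$ almost surely, and $|Z_n/n|\le1$, so bounded convergence gives $a_n/n\to v(0)$. Thus $d_j$ converges in the Cesàro sense to $v(0)$. By Abel's theorem, $p\sum_j d_j(1-p)^j\to v(0)$ as $p\downarrow0$. Together with analyticity on $(0,1]$, this proves part 1.

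\emph{Derivative formula.} I would verify \eqref{vAbleitungen} by comparing coefficients. On the left, $v^{(k)}(p)$ is obtained by differentiating $p\sum_j d_j q^j$ termwise in $p$. On the right, I would use
\begin{align*}
\EE_p[Z_{N_k}]=\sum_m a_m\binom{m-1}{k-1}p^kq^{m-k} .
\end{align*}
The second difference $\EE_p[Z_{N_{k-1}}]-2\EE_p[Z_{N_k}]+\EE_p[Z_{N_{k+1}}]$ should be rewritten in terms of the $d_j$. The cleanest route is to write $N_{k+1}=N_k+G$ with $G\sim\mathrm{Geom}(p)$ independent, so that $\EE[Z_{N_{k+1}}]-\EE[Z_{N_k}]=\sum_j d_j\PP(N_k\le j<N_{k+1})$. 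Then $\PP(N_k\le j<N_{k+1})=\binom{j}{k}p^kq^{j-k}$ is the probability of exactly $k$ successes in $j$ trials. The claim then reduces to the identity
\begin{align*}
\frac{d^k}{dp^k}\bigl(p\,q^j\bigr)=(-1)^k\frac{k!}{p^{k-1}}\left(\binom{j}{k}p^kq^{j-k}-\binom{j}{k-1}p^{k-1}q^{j-k+1}\right),
\end{align*}
which I would check by Leibniz's rule: $\frac{d^k}{dp^k}(pq^j)=p(-1)^kj^{\underline k}q^{j-k}+k(-1)^{k-1}j^{\underline{k-1}}q^{j-k+1}$. Termwise differentiation is justified by the uniform convergence of the series on compacts of $(0,2)$. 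I expect this bookkeeping step to be the main obstacle, in particular getting the signs and the $k=1$ case with $N_0=0$ right.

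\emph{Local maximum at $1$.} At $p=1$ we have $N_k=k$ almost surely. The formula gives
\begin{align*}
v'(1)=-(a_0-2a_1+a_2)=-(d_1-d_0), \qquad v''(1)=2(d_2-d_1).
\end{align*}
Since $Z_1\neq0$, the site $Z_1$ carries an environment value independent of $\omega_0$, so $d_1=d_0$ and hence $v'(1)=0. For $d_2$, the walk returns to $0$ with probability $P(Z_2=0)=2m(1-m)$, where $m=\EE\omega_0$, and on that event it reuses $\omega_0$. Otherwise it sees a fresh site. A short computation then gives
\begin{align*}
d_2-d_0=\EE[\mathbf 1_{\{Z_2=0\}}(2\omega_0-1)]-\PP(Z_2=0)\,d_0=2\Var(\omega_0)(1-2m).
\end{align*}
Since $\EE_p[Z_1]=2m-1>0$ and $\Var(\omega_0)>0$, this is strictly negative. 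Hence $v''(1)=4\Var(\omega_0)(1-2m)<0$. Because $v$ is $C^2$ in a neighbourhood of $1$ by analyticity, this yields a strict local maximum at $p=1$.
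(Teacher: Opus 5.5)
Your proof is correct, and it takes a different route from the paper in all three parts.

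\emph{Representation.} You expand in the increments $d_j=\EE[Z_{j+1}]-\EE[Z_j]$, so $v(p)=p\sum_{j\ge0}d_j(1-p)^j$. The paper uses $v(p)=p^2\sum_{m\ge1}\EE_p[Z_m](1-p)^{m-1}$; the two agree by summation by parts.

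\emph{Continuity at $0$.} The paper couples the geometric times $\tau_1^p$, shows $p\tau_1^p\to\mathrm{Exp}(1)$ in distribution, and uses uniform integrability. You replace this by the analytic fact that Ces\`aro convergence of $(d_j)$ implies Abel convergence. Strictly, this is Frobenius' Abelian theorem rather than Abel's theorem, because $d_j$ itself need not converge. It follows at once from $p\sum_j d_jq^j=\sum_{n\ge1}\frac{\EE[Z_n]}{n}\,np^2q^{n-1}$, where the weights $np^2q^{n-1}$ sum to $1$ and vanish on every finite set as $p\to0$.

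\emph{Derivative formula.} The paper proves \eqref{vAbleitungen} by induction on $k$ via $\frac{d}{dp}\EE_p[Z_{N_k}]=\frac kp\bigl(\EE_p[Z_{N_k}]-\EE_p[Z_{N_{k+1}}]\bigr)$. You verify it for all $k$ at once: the identity $\EE_p[Z_{N_{k+1}}]-\EE_p[Z_{N_k}]=\sum_j d_j\binom jk p^kq^{j-k}$ reduces it to the Leibniz identity for $\frac{d^k}{dp^k}(pq^j)$. That identity does hold termwise, including for $j<k$ and for $k=1$ with $N_0=0$. Your route also shows directly why a second difference of $k\mapsto\EE_p[Z_{N_k}]$ appears.

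\emph{Second derivative at $1$.} The paper enumerates $\PP(Z_3=\pm1)$ and $\PP(Z_3=\pm3)$. You instead note that $\EE[Z_3]-3\EE[Z_1]=d_2-d_0$, and that up to time $2$ the environment is reused only at the origin, on $\{Z_2=0\}$. The computation you omitted does work out: $\EE[(2\omega_0-1)\mathbf 1_{\{Z_2=0\}}]=2(1-2m)(\EE[\omega_0^2]-m)$. Hence $d_2-d_0=2(1-2m)\Var(\omega_0)$ and $v''(1)=-4\Var(\omega_0)\EE[Z_1]$, as in the paper. Your $d_2-d_0$ is also exactly the quantity the paper later reuses for the diffusivity.

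\emph{Comparison.} Your argument is shorter, purely analytic, and shows more clearly where the environment's memory enters. The paper's coupling argument and its recursion for $\frac{d}{dp}\EE_p[Z_{N_k}]$ are more probabilistic in flavour. Neither approach yields a stronger statement.
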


We remark that analyticity on $(0,1]$ is a direct consequence of a power series representation of $v(p)$, see (\ref{vpowerseries}). 
Note that the process $(Z_n)_{n\geq 0}$ appearing in Equation \eqref{vAbleitungen} is moving in the first environment only, but the distribution of the random variables $N_k$ depends on $p$. 
Finally, we remark that the third statement in Theorem \ref{thm_mainspeed} implies that if $\EE_p[Z_1]<0$ and $  \Var(\omega_0)>0$, the speed has a strict local minimum in $p=1$, which follows by considering the reflected environment $\hat\omega$ with $\hat{\omega}^{(k)}_x=1-\omega^{(k)}_{-x}$ for $x\in\ZZ$ and $k\geq 0$.

It is an interesting question to ask for the monotonicity of $p\mapsto v(p)$. A characteristic feature of random walks in random environments is the strong influence of \emph{traps} on the process. At large times, a typical position of the particle is in a trap, a local configuration of the environment, which holds the process for a long time. In a one-dimensional environment, the concept of such traps can be made precise with the help of potential valleys, see \cite{sinai1983limiting,golosov1984localization,gantert2010infinite}. In our model, the regeneration of the environment will free the process of the current trap, so it is natural to expect $v(p)$ to be increasing in $p$, when the mean drift $\EE_p[Z_1]$ is positive. Theorem \ref{thm_mainspeed} shows that this is indeed true near $p=1$. However, we obtain as a consequence of the following proposition that the speed is, in general, \emph{not} increasing in $p$.   

In order to formulate the statement, we define
\begin{align*}
    \rho_x \coloneqq \frac{1-\omega_x}{\omega_x},\ x\in\ZZ.
\end{align*}
We then require 
\begin{align}\label{con1}
    \mathrm{E}_p[|\log \rho_0|]<\infty,\qquad
\end{align}
and the existence of an $\varepsilon>0$ such that
\begin{align}\label{con2}
\mathrm{E}_p[\rho_0^\varepsilon]<\infty.
\end{align}
These are conditions on the environment law only, and do not depend on $p$. 

\begin{proposition}\label{prop:mainspeed2} 
Suppose that \eqref{con1} and \eqref{con2} hold and
\begin{align}\label{con0}
\mathrm{E}_p[\log \rho_0]<0,\qquad
\mathrm{E}_p[\rho_0]\ge 1.
\end{align}
Then the right-sided derivative of the speed at $p=0$ is infinite:
\[v'(0)=\infty.\]
\end{proposition}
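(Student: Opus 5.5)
The plan is to reduce the statement to a divergence statement for the static walk $(Z_n)_{n\geq 0}$ observed at an independent geometric time. First, the assumption $\mathrm{E}_p[\rho_0]\ge 1$ together with \eqref{con1} and $\mathrm{E}_p[\log\rho_0]<0$ puts us in the transient-to-the-right, zero-speed regime of Solomon's theorem for the static walk. Hence $v(0)=0$, and the right derivative at $0$ is $\lim_{p\downarrow 0} v(p)/p$.

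Next I would rewrite $v(p)$ via \eqref{speed1}. Up to time $\tau_1$ the walk $X$ moves in the first environment only, and $\tau_1$ is independent of the walk. Hence $X_{\tau_1}$ has the law of $Z_{\tau_1}$, and
\begin{align*}
\frac{v(p)}{p} = \EE_p[X_{\tau_1}] = \EE_p[Z_{\tau_1}] = \sum_{m\ge 1} p(1-p)^{m-1}\,\EE[Z_m],
\end{align*}
where $\EE[Z_m]$ does not depend on $p$. It therefore suffices to show $\EE_p[Z_{\tau_1}]\to\infty$ as $p\downarrow 0$.

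Since $\mathrm{E}_p[\log\rho_0]<0$, the static walk is transient to the right, so $Z_n\to\infty$ almost surely. As $\tau_1\to\infty$ in probability when $p\downarrow 0$ and $\tau_1$ is independent of $Z$, we get $Z_{\tau_1}\to\infty$ in probability. To pass to expectations I would use Fatou's lemma on $Z_{\tau_1}+M\ge 0$, where $M:=-\inf_{n\ge0} Z_n\ge 0$. Taking an almost surely convergent subsequence along any sequence $p_j\downarrow 0$ (with a coupling $\tau_1=\lceil \log U/\log(1-p)\rceil$ for a single uniform $U$, so that $\tau_1\to\infty$ almost surely), Fatou gives
\begin{align*}
\liminf_{p\downarrow0}\EE_p[Z_{\tau_1}]\ge \EE[\liminf Z_{\tau_1}]-\EE[M]=\infty,
\end{align*}
provided $\EE[M]<\infty$.

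The main obstacle is proving $\EE[M]<\infty$, and this is where \eqref{con2} enters. I would use the classical formula for the quenched probability that a right-transient walk started at $0$ ever hits $-k$:
\begin{align*}
P_\omega(T_{-k}<\infty)=\frac{\sum_{j\ge 0}\prod_{i=-k+1}^{j}\rho_i}{\sum_{j\ge -k}\prod_{i=-k+1}^{j}\rho_i}\le \min\Bigl(1,\prod_{i=-k+1}^{0}\rho_i\cdot S\Bigr),
\end{align*}
where $S=\sum_{j\ge0}\prod_{i=1}^j\rho_i$ (empty product $=1$). The inequality holds because the denominator is at least $1$, coming from the $j=-k$ term. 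Next I use $\min(1,x)\le x^{\delta}$ for $\delta\in(0,1]$. The function $\delta\mapsto \mathrm{E}_p[\rho_0^\delta]$ is finite on $[0,\varepsilon]$, equals $1$ at $\delta=0$, and has right derivative $\mathrm{E}_p[\log\rho_0]<0$ there (justified by \eqref{con1}, \eqref{con2} and dominated convergence). Hence there is a $\delta\in(0,\min(1,\varepsilon)]$ with $a:=\mathrm{E}_p[\rho_0^\delta]<1$. Independence of the $\rho_i$ and subadditivity of $x\mapsto x^\delta$ then give
\begin{align*}
\PP(T_{-k}<\infty)\le a^k\,\mathrm{E}_p[S^\delta]\le a^k\sum_{j\ge0}a^j=\frac{a^k}{1-a}.
\end{align*}
So $\PP(M\ge k)$ decays exponentially and $\EE[M]<\infty$, which completes the argument and yields $v'(0)=\infty$.
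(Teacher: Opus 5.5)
Your proof is correct, and it reaches the key estimate by a different and more elementary route than the paper.

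Both arguments reduce to showing $\EE_p[Z_{\tau_1}]=v(p)/p\to\infty$, using $v(0)=0$. The paper first proves $\EE[Z_m]\to\infty$ (Lemma~\ref{lem:erwnachrechts}), and then averages over the geometric $\tau_1$. For that lemma it uses the Sznitman--Zerner regeneration times $R_n$ of the static walk and the bound $\EE[Z_m]\ge \EE[\min_{k\le R_1}Z_k]+n\,\PP(R_n\le m)$. It controls $\EE[\min_{k\le R_1}Z_k]$ through four ingredients:
\begin{itemize}
\item the stationary downward displacements $t_k$;
\item Cauchy--Schwarz;
\item the exponential moment $\EE[e^{cZ_{R_1}}]<\infty$, imported from Lemma B.3 of Avena et al.;
\item the estimate $P_\omega(T_{-l}<T_1)\le\prod_{j=-l+1}^{0}\rho_j^{\alpha}$.
\end{itemize}

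You instead work with the all-time minimum $M=-\inf_n Z_n$, and you bound $P_\omega(T_{-k}<\infty)$ rather than $P_\omega(T_{-k}<T_1)$. This produces an extra factor $S$, which you handle using its independence from $\rho_{-k+1},\dots,\rho_0$ together with subadditivity of $x\mapsto x^\delta$. That gives $\PP(M\ge k)\le a^k/(1-a)$, and Fatou applied to $Z_{\tau_1}+M\ge 0$ finishes the argument.

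What your route buys:
\begin{itemize}
\item It is self-contained: no regeneration structure and no external exponential-moment lemma are needed. The hypotheses \eqref{con1} and \eqref{con2} enter only to produce some $\delta$ with $\mathrm{E}[\rho_0^\delta]<1$.
\item It gives exponential tails for $M$. This is stronger than the paper's conclusion that $\EE[\min_{k\le R_1}Z_k]>-\infty$.
\item The same Fatou argument applied to $Z_m+M\ge 0$ recovers Lemma~\ref{lem:erwnachrechts} at no extra cost.
\end{itemize}

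The paper's argument sits within the standard regeneration framework for transient walks. In this one-dimensional nearest-neighbour setting, however, it still relies on the explicit hitting formula plus an imported tail bound on $Z_{R_1}$, and your direct estimate on $M$ makes that bound unnecessary.
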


We recall that by the classical work of \cite{solomon1975random}, the condition $\mathrm{E}_P[\log \rho_0]<0$ implies that the random walk in a static environment is transient to the right, and $\mathrm{E}_P[\rho_0]\ge 1$ corresponds to the zero-speed regime, such that $v(0)=0$. 
From Proposition \ref{prop:mainspeed2}, we conclude that the speed $v(p)$ is, in general, not monotone.

\begin{cor}\label{cor1}
There are environment laws, such that $v$ is not monotone: 
Suppose that  \eqref{con1}, \eqref{con2} and \eqref{con0} hold, and that $\EE_p[Z_1]<0$. Then $v(0)=0$, $v'(0)=\infty$ and $v(1)=\EE_p[Z_1]<0$. Consequently, $v$ is not monotone.  
\end{cor}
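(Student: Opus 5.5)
The corollary follows by combining Proposition \ref{prop:mainspeed2} with two explicit evaluations of $v$, at $p=0$ and at $p=1$; after that, a short real-analysis argument rules out monotonicity.

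\textbf{Step 1: $v(0)=0$.} I take this from the classical results of \cite{solomon1975random} for the static environment. Condition $\mathrm{E}_p[\log\rho_0]<0$ gives transience to the right. Condition $\mathrm{E}_p[\rho_0]\ge 1$ places the walk in the zero-speed regime, because Solomon's formula gives $v(0)=(1-\mathrm{E}[\rho_0])/(1+\mathrm{E}[\rho_0])$ when $\mathrm{E}[\rho_0]<1$ and $v(0)=0$ otherwise. Since $\PP_0$ is exactly the annealed law of a walk in a static i.i.d.\ environment, this gives $v(0)=0$.

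\textbf{Step 2: $v'(0)=\infty$.} This is precisely the conclusion of Proposition \ref{prop:mainspeed2}, whose hypotheses \eqref{con1}, \eqref{con2} and \eqref{con0} are assumed here.

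\textbf{Step 3: $v(1)=\EE_p[Z_1]$.} When $p=1$ the environment is resampled after every step, so $\tau_1=1$ almost surely. By \eqref{speed1}, $v(1)=\EE_1[X_{\tau_1}]=\EE_1[X_1]$. The first step uses the environment $\omega^{(0)}$, exactly as $Z_1$ does, so $\EE_1[X_1]=\EE[Z_1]=2\mathrm{E}[\omega_0]-1$. This quantity is independent of $p$, as noted in the text, and by assumption it is negative.

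\textbf{Step 4: non-monotonicity.} Since $v(0)=0$ and the right derivative at $0$ is $+\infty$, the difference quotients $v(p)/p$ tend to $+\infty$ as $p\downarrow 0$. Hence $v(p_0)>0$ for some small $p_0>0$. We then have $v(0)=0<v(p_0)$ and $v(p_0)>0>v(1)$, so $v$ is neither nondecreasing nor nonincreasing on $[0,1]$.

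None of these steps is a real obstacle, since the heavy lifting is in Proposition \ref{prop:mainspeed2}. The only care needed is in Step 1: the reference must cover the boundary case $\mathrm{E}[\rho_0]=1$ with zero speed, which it does in Solomon's theorem. Finally, a law with $\mathrm{E}[\log\rho_0]<0$, $\mathrm{E}[\rho_0]\ge1$ and $\mathrm{E}[\omega_0]<1/2$ exists, so the corollary is not vacuous. An example is $\omega_0$ taking a value close to $1$ with large probability and a small value with small probability, tuned so that $\mathrm{E}[\omega_0]<1/2$; this can be checked directly.
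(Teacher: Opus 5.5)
Your argument is correct and matches the paper's reasoning: $v(0)=0$ from Solomon's zero-speed criterion, $v'(0)=\infty$ from Proposition \ref{prop:mainspeed2}, $v(1)=\EE_1[X_1]=\EE[Z_1]<0$ because $\tau_1=1$ under $\PP_1$, and the sign comparison $v(0)=0<v(p_0)$, $v(p_0)>0>v(1)$. The only slip is your non-vacuity example, which as described is impossible, since a large probability on a value close to $1$ forces $\mathrm{E}[\omega_0]>1/2$. A valid choice is the reflection of \eqref{eq:example} with $\alpha=0.19$, i.e.\ $\omega_0=9/10$ with probability $0.19$ and $\omega_0=2/5$ with probability $0.81$. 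For this law $\mathrm{E}[\log\rho_0]\approx-0.089<0$, $\mathrm{E}[\rho_0]\approx1.236\ge1$ and $\mathrm{E}[\omega_0]=0.495<1/2$.
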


We remark, that Corollary \ref{cor1} holds analogously when we consider an environment law such that the random walk in static random environment is transient to the left with zero speed and $\EE_p[Z_1] > 0$.

It is easy to give explicit distributions of transition probabilities for which these conditions are satisfied, and we consider as an example an environment law defined by 
\begin{align} \label{eq:example}
    \mathrm P_p\left(\omega_0 = \frac{1}{10}\right) = \alpha,\quad \mathrm P_p\left(\omega_0 = \frac{3}{5}\right) = 1-\alpha
\end{align}
for an $\alpha \in (0,1)$.
We obtain that $\EE_p[Z_1]>0$ if $\alpha < \frac{1}{5}$. In addition the random walk in a static environment is transient to the left if $\alpha > \frac{\log(3/2)}{\log(27/2)} \approx 0,156$. 

This gives us that, e.g., for $\alpha = 0.19$, the speed $v(p)$ is not monotone. This behavior is also observed in simulations of the speed for this particular environment law; see Figure \ref{fig:v(p)}.
On the other hand, for $\alpha<\frac{1}{25}$ we have that the random walk in static environment is transient to the right with $v(0)>0$. In general, we expect that $v(p)$ is monotone in this situation. A simulation for $\alpha=0.03$ supports this conjecture.  
Furthermore, we see, as stated in Theorem \ref{thm_mainspeed}.3, that $v(p)$ has a local maximum in $p=1$. 

\begin{figure}[H]
\centering

\begin{minipage}{0.48\textwidth}
    \centering
    \includegraphics[width=\textwidth, height= 6 cm]{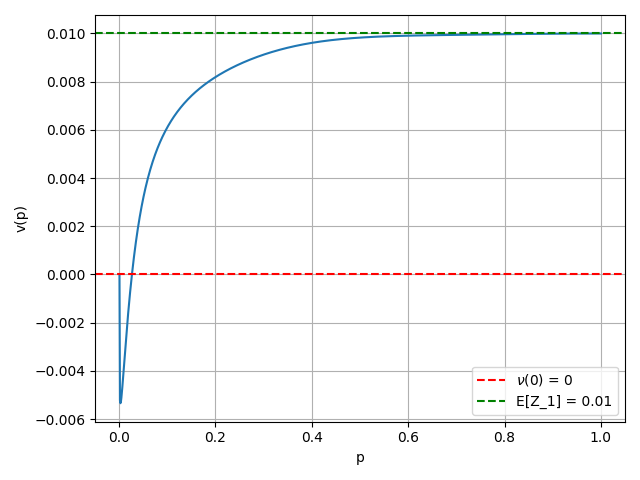}

\end{minipage}
\hfill
\begin{minipage}{0.48\textwidth}
    \centering
    \includegraphics[width=\textwidth, height= 6cm]{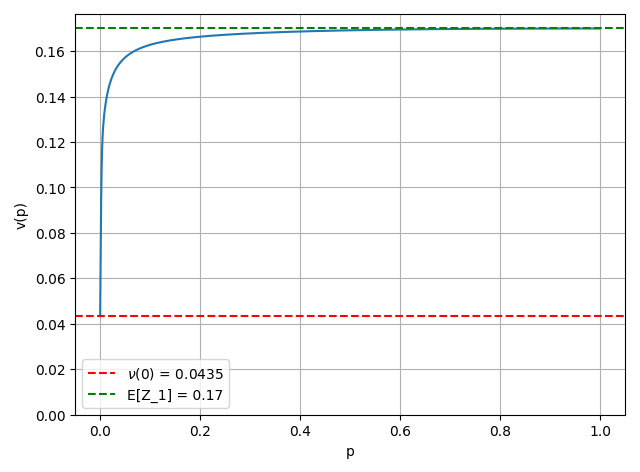}
\end{minipage}

\caption{Simulation of the speed $v(p)$ for the environmental distribution given in \eqref{eq:example} for $\alpha=0.19$ (left) and $\alpha=0.03$ (right).}
\label{fig:v(p)}

\end{figure}


The next statement concerns the behavior of the diffusivity $\sigma^2(p)$. 

\begin{thm}\label{thm:mainvar}
For the effective diffusivity $p\mapsto \sigma^2(p)$ we have the following: 
\begin{enumerate}
\item $\sigma^2$ is infinitely often differentiable on $(0,1]$.
\item If $\EE_p[Z_1]\neq 0$ and $  \Var(\omega_0)>0$, the diffusivity has a strict local minimum in $p=1$. 

\end{enumerate}
\end{thm}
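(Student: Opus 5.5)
We express $\sigma^2(p)$ through the static-environment walk $Z$ and a geometric time, and then proceed exactly as for the speed in Theorem \ref{thm_mainspeed}. Under $\PP_p$, the first regeneration increment has the law of $Z_{\tau_1}$, with $\tau_1\sim\mathrm{Geo}(p)$ independent of $Z$ and of the environment. Hence
\begin{align*}
\sigma^2(p) = p\,\EE_p[Z_{\tau_1}^2] - 2v(p)\,p\,\EE_p[\tau_1 Z_{\tau_1}] + v(p)^2\, p\,\EE_p[\tau_1^2].
\end{align*}
Since $\EE_p[Z_m^2]$ and $\EE_p[mZ_m]$ do not depend on $p$, each term is a power series in $q=1-p$. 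For example,
\begin{align*}
p\,\EE_p[Z_{\tau_1}^2]=p^2\sum_{m\ge1} q^{m-1}\EE[Z_m^2].
\end{align*}
The coefficients grow at most like $m^2$ because $|Z_m|\le m$. These series therefore converge on $q\in[0,1)$, i.e.\ $p\in(0,1]$, and can be differentiated termwise arbitrarily often. Together with the smoothness of $v$ from Theorem \ref{thm_mainspeed}, this gives part 1. Alternatively, one could differentiate under the sum, as in the derivation of \eqref{vAbleitungen}.

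For part 2 we expand around $q=0$. The values $\sigma^2(1)$ and $\sigma^{2\prime}(1)$ are read off from the first terms, i.e.\ from $m=1,2$. Write $\mu=\EE[Z_1]=\EE[2\omega_0-1]$, so that $\sigma^2(1)=1-\mu^2$. The statement to verify is that $p=1$ is a strict local minimum on $(0,1]$, i.e.\ that $\sigma^2$ decreases strictly as $p\uparrow 1$. It suffices to show $\frac{d}{dp}\sigma^2(p)\big|_{p=1}<0$, equivalently $\frac{d}{dq}\sigma^2>0$ at $q=0$. We compute this using $\EE[Z_2]$, $\EE[Z_2^2]$ and $v'(1)$ from \eqref{vAbleitungen}.

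The main step is to compute the first-order coefficient explicitly. Conditioning on $\omega_0$ and $\omega_{\pm1}$ gives
\begin{align*}
\EE[Z_2]=\EE[\omega_0(2\omega_1-1)+(1-\omega_0)(2\omega_{-1}-1)]+\mu=2\mu,
\end{align*}
because $\omega_{\pm1}$ is independent of $\omega_0$; the second step revisits site $0$ only if $Z_1$ returns. Similarly, $\EE[Z_2^2]=4\,\PP(Z_2\neq0)$, and
\begin{align*}
\PP(Z_2=0)=\EE[\omega_0(1-\omega_1)+(1-\omega_0)\omega_{-1}]
\end{align*}
coincides with its value for a homogeneous walk with drift $\mu$. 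The variance of $\omega_0$ enters only at the third step, where $Z_3$ can return to the origin and reuse $\omega_0$. Since $\tau_1$ takes values $\ge 3$ with probability $O(q^2)$, this effect seems to appear only at second order in $q$.

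If that is the case, the first derivative at $p=1$ vanishes, and the strict minimum must come from the second derivative: $\frac{d^2}{dq^2}\sigma^2(q)|_{q=0}>0$. This is what I expect to be the main obstacle. The plan is to compute the $q^2$ coefficient from $\EE[Z_3]$, $\EE[Z_3^2]$ and $\EE[Z_3Z_0]$, where the only deviation from the homogeneous walk comes from returns to $0$ at step 2 followed by reuse of $\omega_0$. Writing $\omega_0=\frac{1+\mu}{2}+\xi$ with $\EE\xi=0$ and $\EE\xi^2=\Var(\omega_0)$, the correction should be a positive multiple of $\Var(\omega_0)$ times a factor depending on $\mu$. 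The mixed terms in $v$ and $v'$ must then cancel or combine to leave a strictly positive coefficient, which requires both $\mu\neq0$ and $\Var(\omega_0)>0$. This mirrors the proof of Theorem \ref{thm_mainspeed}.3, whose second-order computation for $v$ can be reused for the cross term $-2v\,p\,\EE[\tau_1Z_{\tau_1}]$.
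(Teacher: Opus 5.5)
Part 1 is correct and follows the paper's argument. Expanding $(Z_m-mv(p))^2$ in the paper's series $\sigma^2(p)=p^2\sum_m \EE[(Z_m-mv(p))^2](1-p)^{m-1}$ gives exactly your three power series in $q=1-p$. These converge for $|q|<1$, so $p=1$ is an interior point, and the smoothness of $v$ finishes the argument.

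\textbf{Part 2 has a genuine gap: the sign of $(\sigma^2)''(1)$, which is the entire content of part 2, is conjectured rather than computed.} As you suspected, the first-order criterion is unavailable because $(\sigma^2)'(1)=0$. For the second derivative you only state expectations: that the $q^2$ coefficient ``should be'' a positive multiple of $\Var(\omega_0)$, and that the terms involving $v$ ``must cancel or combine''. Also, $\EE[Z_3Z_0]\equiv 0$ since $Z_0=0$; presumably you meant $\EE[Z_3]$.

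The missing computation is short. Write $\sigma^2=(1-q)^2\sum_{m\ge1}a_m q^{m-1}$ with $a_m=\Var(Z_m)+(\EE[Z_m]-mv(p))^2$. Since $\EE[Z_2]=2\mu$, and $v(p)-\mu=O(q^2)$ because $v'(1)=0$, you get $a_m=\Var(Z_m)+O(q^4)$ for $m=1,2$. Consequences:
\begin{enumerate}
\item The $q$-coefficient is $\Var(Z_2)-2\Var(Z_1)=0$. This settles your conditional ``if that is the case''.
\item $v''(1)$ drops out entirely, and
\[
(\sigma^2)''(1)=2\bigl(\EE[(Z_3-3\mu)^2]-3(1-\mu^2)\bigr).
\]
\end{enumerate}
Next, $\EE[Z_3^2]=3+6\mu^2$ is the same as for the homogeneous walk, since only the straight paths have $|Z_3|=3$. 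By contrast, $\EE[Z_3]=3\mu-2\mu\Var(\omega_0)$, from the two paths that return to $0$ at time $2$ and reuse $\omega_0$. This is exactly the computation in Lemma~\ref{lem:speedat1}. Substituting gives $\EE[(Z_3-3\mu)^2]=3-3\mu^2+12\mu^2\Var(\omega_0)$, hence $(\sigma^2)''(1)=24\mu^2\Var(\omega_0)$. This is strictly positive exactly under the hypotheses $\mu\neq0$ and $\Var(\omega_0)>0$.

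So the positivity comes from the shift of $\EE[Z_3]$ entering with coefficient $-6\mu$, i.e.\ through your cross term $-2v\,p\,\EE[\tau_1Z_{\tau_1}]$. It does not come from any correction to $\EE[Z_3^2]$. This is what your sketch needed to establish. The paper reaches the same value via $\Var(Z_3)$ and $\mathrm{Cov}(Z_2,Z_3-Z_2)$.
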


In a static environment, the random walk is strongly localized in traps in the environment, which may be formalized as valleys in the potential landscape \cite{sinai1983limiting,golosov1984localization}. This trapping typically leads to a small variance, in Sinai's regime of order $\log(n)^2$. Regenerating the environment frees the random walk from such traps allowing for larger fluctuations. On the other hand, fluctuations of the location of potential valleys also contribute to the asymptotic variance, an effect that is lost in particular when $p$ is closer to 1. This can explain the local minimum of the variance at $p=1$, although it is hard to quantify this effect more precisely. As can be seen in simulations, $\sigma^2$ exhibits only a very slight decrease, see Figure \ref{fig:diffusivity1}, where we consider the environment law defined by 
\begin{align}\label{eq:example2}
\mathrm P_p \left( \omega_0 = \frac{1}{5}\right) = \frac{3}{8},\quad \mathrm P_p\left(\omega_0  = \frac{4}{5}\right) = \frac{5}{8}.
\end{align}

\begin{figure}[H]
\centering

\begin{minipage}{0.48\textwidth}
    \centering
    \includegraphics[width=\textwidth]{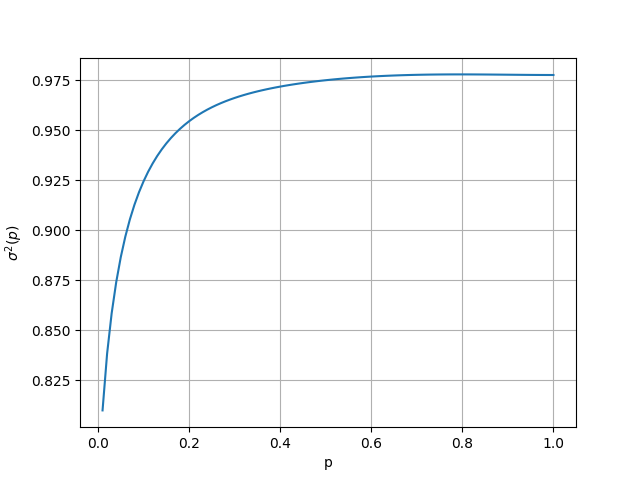}
\end{minipage}
\hfill
\begin{minipage}{0.48\textwidth}
    \centering
    \includegraphics[width=\textwidth]{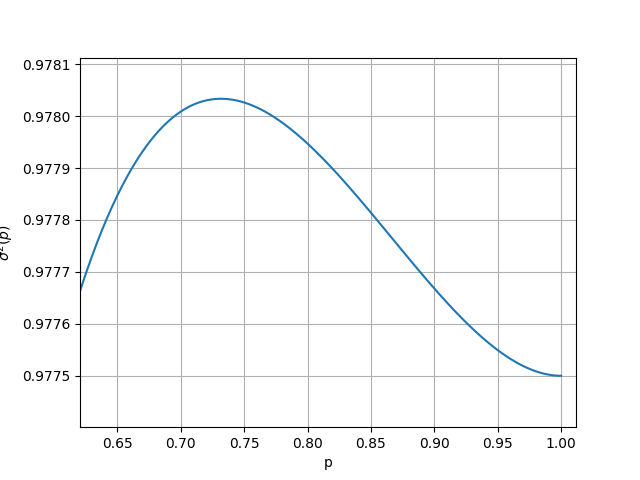}
\end{minipage}
\caption{Simulation of the diffusivity $\sigma^2(p)$ for the environmental distribution given in \eqref{eq:example2}. The right panel shows a magnified view for values of $p$ close to $1$.}
\label{fig:diffusivity1}

\end{figure}

The limit of the diffusivity at $p=0$ is more involved than continuity of the speed. Indeed, in a static random environment, the correct rescaling of $(Z_n)_{n\geq 0}$ to obtain a nondegenerate limit distribution can vary between $1/\log(n)^2$ in Sinai's regime and $1/n^\alpha$ for stable regimes. It is therefore not meaningful to compare $\sigma^2(p)$ for $p>0$ with the asymptotic variance in a static environment.

\section{Proofs}
\label{sec:proofs}

\subsection{Proof of Theorem \ref{thm_mainspeed} 1., continuity of the speed}
We can find a power series representation of the speed, which implies the continuity of $v$ on $(0,1]$.
We have:
\begin{align}
\label{vpowerseries}
v(p)=
p\EE_p[X_{\tau_1}]
&= p\sum_{m=1}^\infty \EE_p[X_{\tau_1}\mid \tau_1=m]\cdot \PP_p(\tau_1=m) \\ \nonumber
&= p^2\sum_{m=1}^\infty \EE_p[Z_m](1-p)^{m-1},
\end{align}
where the last identity applies because $\tau_1$ is geometrically distributed.

The power series has at least a radius of convergence $R=1$, so the right-hand side of (\ref{vpowerseries}) defines a function that is continuous on $(0,2)$ and agrees with $v$ on $(0,1]$. Hence $v$ is continuous on $(0,1]$. 

For the proof of continuity in $p=0$, we consider for $p\in (0,1]$:
\begin{align*}
v(p) 
= p\EE_p[X_{\tau_1}]
= \EE_p\left[\frac{X_{\tau_1}}{\tau_1}p\tau_1\right].
\end{align*}

Under $\PP_p$, the random variable $\tau_1$ is geometrically distributed with parameter $p$. There exists a coupling of random variables $\tau_1^p$, $p\in (0,1]$, such that $\tau_1^p$ is geometrically distributed with parameter $p$ and $\tau_1^p\leq \tau_1^{p'}$ for $p'\leq p$. We may assume that these random variables are defined on the same probability space as the sequence of environments and the process and denote the joint annealed law by $\hat{\PP}$. 
We then have $\hat \PP(\tau_1^p\leq x)\to 0$ as $p\to 0$ for all $x\in\mathbb R$. Thus, ${\tau}_1^p\xrightarrow[p\to 0]{}\infty$ almost surely. We therefore have $\hat \PP$-almost surely 
\begin{align*}
    \lim_{p\to 0}\frac{X_{\hat{\tau}_1^p}}{\hat{\tau}_1^p} 
    = \lim_{p\to 0}\frac{Z_{\hat{\tau}_1^p}}{\hat{\tau}_1^p} 
    = \lim_{n \to\infty}\frac{Z_{n}}{n} 
    = v(0) .
\end{align*}
Furthermore, we have $p\tau_1^p \xrightarrow[p\to 0]{d} \hat\tau\sim \text{Exp}(1)$ in distribution under $\hat \PP$, which follows from 
\begin{align*}
\hat{\EE}\left[e^{it(p\tau_1^p)}\right] 
 = \frac{pe^{itp}}{1-(1-p)e^{itp}}\xrightarrow[p\to 0]{}  \frac{1}{1-it} 
= \hat{\EE}[e^{it \hat\tau}] .
\end{align*}
Moreover, the family $(p\tau_1^p)_{p\in (0,1]}$ is bounded in $L^2(\hat{\PP})$ and is in particular uniformly integrable. Since $|X_n/n|\leq 1$, the same is true for $(X_{\tau_1^p}/\tau_1^p\cdot p\tau_1^p)_{p\in (0,1]}$. This yields 
\begin{align*}
\lim_{p\to 0}v(p)
= \lim_{p\to 0}\EE_p\left[\frac{X_{\tau_1}}{\tau_1}p\tau_1\right]
= \lim_{p\to 0}\hat{\EE}\left[\frac{X_{\tau_1^p}}{\tau_1^p}p\tau_1\right]
= \hat{\EE}[v(0)\hat\tau ] = v(0) . 
\end{align*}


\subsection{Proof of Theorem \ref{thm_mainspeed} 2., differentiability of the speed}

Recall the power series representation of the speed in \eqref{vpowerseries}. The right-hand side of (\ref{vpowerseries}) is infinitely differentiable on $(0,2)$ and since the right-hand side of (\ref{vpowerseries}) agrees with $v(p)$ for $p\in(0,1]$, it follows that $v$ is infinitely differentiable on (0,1]. 

Recall that under $\PP_p$, the random variable $N_k$ is negatively binomial distributed with parameters $k$ and $p$. Conditioning on the value of $N_k$, we have
\begin{align}\label{3g}
    \EE_p\left[Z_{N_k}\right] 
    &= \sum_{m=1}^\infty \EE_p[Z_{N_k} \mid N_k = m]\cdot \PP_p({N_k = m})
    = \sum_{m=1}^\infty \EE_p[Z_{m}]\cdot \PP_p({N_k = m})\nonumber \\ 
    &= \sum_{m=k}^\infty \EE_p[Z_{m}]\cdot \binom{m-1}{k-1}p^k(1-p)^{m-k}, 
\end{align}
which gives again a power series representation convergent on $(0,2)$, since $|\EE_p[Z_{m}]|\leq m$. 

We prove the formula (\ref{vAbleitungen}) by induction. 
Using formula \eqref{vpowerseries}, the first derivative of $v$ is given by 
\begin{align*}
    v'(p) & = \sum_{m=1}^\infty \EE_p[Z_m]\left(2p(1-p)^{m-1}-p^2(m-1)(1-p)^{m-2}\right) \\
    & =  2 \EE_p[Z_{N_1}] - \EE_p[Z_{N_2}],
\end{align*}
which agrees with the right hand side of \eqref{vAbleitungen} for $k=1$. 

For $k\geq 1$, we may compute inductively 
\begin{align} \label{eq:vdiffrecursion}
    v^{(k+1)}(p) \nonumber 
    &= \frac{d}{dp} \left((-1)^k\frac{k!}{p^{k-1}}\left(
    \EE_p[Z_{N_{k-1}}]
    - 2 \EE_p[Z_{N_k}]
    + \EE_p[Z_{N_{k+1}}]
    \right)\right) \\ \nonumber
    &= (-1)^{k+1}\frac{k!}{p^k}\Bigg((k-1)\left(\EE_p[Z_{N_{k-1}}]
    - 2 \EE_p[Z_{N_k}]
    + \EE_p[Z_{N_{k+1}}]\right) \\ 
    &\qquad - p\left(\frac{d}{dp}\left(\EE_p[Z_{N_{k-1}}]
    - 2 \EE_p[Z_{N_k}]
    + \EE_p[Z_{N_{k+1}}]\right)\right)\Bigg).
\end{align}
We claim that
\begin{align}\label{5a}
    \frac{d}{dp}\EE_p\left[Z_{N_k}\right] = \frac{k}{p}\left(\EE_p\left[Z_{N_k}\right]-\EE_p\left[Z_{N_{k+1}}\right]\right)
\end{align}
for all $k\geq 1$.
Indeed, differentiating \eqref{3g} and using $(m-k)\binom{m-1}{k-1}= k \binom{m-1}{k}$, we obtain 
\begin{align*}
    &\frac{d}{dp}\EE_p[Z_{N_k}] \\
    & = \sum_{m=k}^\infty \EE_p[Z_{m}]\cdot \binom{m-1}{k-1}kp^{k-1}(1-p)^{m-k} - \sum_{m=k+1}^\infty \EE_p[Z_{m}]\cdot \binom{m-1}{k-1} p^k(m-k)(1-p)^{m-k-1} \\ \nonumber
    &= \frac{k}{p}\left(\sum_{m=k}^\infty \EE_p[Z_{m}]\cdot \binom{m-1}{k-1} p^{k}(1-p)^{m-k} - \sum_{m=k+1}^\infty \EE_p[Z_{m}] \binom{m-1}{k} p^{k+1}(1-p)^{m-(k+1)}\right) \\ \nonumber
    &= \frac{k}{p}\left(\EE_p[Z_{N_k}]-\EE_p[Z_{N_{k+1}}]\right),
\end{align*}
which proves (\ref{5a}). Returning to \eqref{eq:vdiffrecursion}, Identity (\ref{5a}) implies
\begin{align*}
    &(k-1)(\EE_p[Z_{N_{k-1}}]
    - 2 \EE_p[Z_{N_k}]
    + \EE_p[Z_{N_{k+1}}]) 
    - p\left(\frac{d}{dp}(\EE_p[Z_{N_{k-1}}]
    - 2 \EE_p[Z_{N_k}]
    + \EE_p[Z_{N_{k+1}}])\right) \nonumber \\ \nonumber
    &= p \frac{d}{dp}\EE_p[Z_{N_{k-1}}]-p\frac{k-1}{k}\frac{d}{dp}\EE_p[Z_{N_k}]
    - p\left(\frac{d}{dp}(\EE_p[Z_{N_{k-1}}]
    - 2 \EE_p[Z_{N_k}]
    + \EE_p[Z_{N_{k+1}}])\right) \\ 
    &= \frac{k+1}{k}p\frac{d}{dp}\EE_p[Z_{N_k}] -p \frac{d}{dp}\EE_p[Z_{N_{k+1}}].
\end{align*}
Thus, we have
\begin{align*}
    v^{(k+1)}(p)
    &= (-1)^{k+1}\frac{k!}{p^k}\left(\frac{k+1}{k}p\frac{d}{dp}\EE_p[Z_{N_k}] -p \frac{d}{dp}\EE_p[Z_{N_{k+1}}]\right) \\ \nonumber
    &= (-1)^{k+1}\frac{k!}{p^k}\left((k+1)(\EE_p[Z_{N_k}]-\EE_p[Z_{N_{k+1}}]) - (k+1)(\EE_p[Z_{N_{k+1}}]-\EE_p[Z_{N_{k+2}}])\right) \\ \nonumber
    &= (-1)^{k+1}\frac{(k+1)!}{p^k}\left(\EE_p[Z_{N_k}]-2\EE_p[Z_{N_{k+1}}] + \EE_p[Z_{N_{k+2}}]\right),
\end{align*}
which proves the claim by induction. 

\subsection{Proof of Theorem \ref{thm_mainspeed}. 3., the derivatives at $p=1$}

From (\ref{vAbleitungen}) we have 
\begin{align} \label{eq:versteableitung}
    v'(1) = -(-2\EE_1[Z_{N_1}]+\EE_1[Z_{N_2}])
    = 2\EE_1[Z_1]-\EE_1[Z_2] , 
\end{align}
since $N_k =k$ almost surely under $\PP_1$. We have $\EE_p[Z_2]=2\EE_p[Z_1]$ and consequently $v'(1)=0$, since $\EE_p[Z_2]=\EE_p[Z_1] + \EE_p[Z_2-Z_1]$ and $Z_1$ and $Z_2-Z_1$ have the same distribution under the annealed law $\PP_p$. 
We remark that $\EE_p[Z_k]=k\EE_p[Z_1]$ does not hold for $k\geq 3$, since the random walk is not a Markov chain under the annealed law and the increments are, in general, not identically distributed. 

The lemma below yields $v''(1)<0$ if $\EE_p[Z_1]>0$ and $\Var(\omega_0)>0$. Hence, the speed attains a strict local maximum at $p=1$, which proves part 3 of Theorem \ref{thm_mainspeed}.

\begin{lem}\label{lem:speedat1}
The second derivative of $v$ at $p=1$ is given by
\begin{align*}
v''(1) =
-4\mathrm{Var}\left(\omega_0\right)\EE_1[Z_1].
\end{align*}
\end{lem}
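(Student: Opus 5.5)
The plan is to specialise the derivative formula \eqref{vAbleitungen} to $k=2$ and $p=1$, and then compute the only unknown quantity, $\EE_1[Z_3]$, by a short path expansion of the static walk. Under $\PP_1$ we have $N_k=k$ almost surely. Formula \eqref{vAbleitungen} with $k=2$ then gives
\begin{align*}
v''(1) = 2\left(\EE_1[Z_1]-2\EE_1[Z_2]+\EE_1[Z_3]\right) = 2\left(\EE_1[Z_3]-3\EE_1[Z_1]\right),
\end{align*}
where we used $\EE_p[Z_2]=2\EE_p[Z_1]$, as shown in the proof of $v'(1)=0$ above. So it suffices to show $\EE_1[Z_3]=3\EE_1[Z_1]-2\Var(\omega_0)\EE_1[Z_1]$. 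Recall that $\EE_p[Z_m]$ does not depend on $p$, since $Z$ moves in the first environment only.

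Write $m=\mathrm{E}_p[\omega_0]$ and $d=\EE_p[Z_1]=2m-1$. We decompose $\EE[Z_3]=\EE[Z_2]+\EE[Z_3-Z_2]$. To evaluate $\EE[Z_3-Z_2]$ we condition on the first two steps and split the four two-step paths into two groups.

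\emph{Paths reaching $\pm 2$.} If $Z_2=\pm2$, the site $Z_2$ has not been visited before. Its transition probability $\omega_{\pm2}$ is independent of the weights already used, so the conditional drift of the third step is $d$. The probabilities of these paths are $\mathrm{E}[\omega_0\omega_1]=m^2$ and $\mathrm{E}[(1-\omega_0)(1-\omega_{-1})]=(1-m)^2$. This group therefore contributes $(m^2+(1-m)^2)\,d$.

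\emph{Paths returning to $0$.} If $Z_2=0$, the third step reuses $\omega_0$; this is the only place where the environment correlation enters. The two paths are $0\to1\to0$ and $0\to-1\to0$. Integrating out $\omega_{\pm1}$, which are independent of $\omega_0$, this group contributes
\begin{align*}
\mathrm{E}\big[(2\omega_0-1)\big(\omega_0(1-m)+(1-\omega_0)m\big)\big]=\mathrm{E}\big[(2\omega_0-1)(m-d\,\omega_0)\big].
\end{align*}
Using $\mathrm{E}[\omega_0^2]=\Var(\omega_0)+m^2$, this simplifies to $d\,(2m-2m^2-2\Var(\omega_0))$.

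Adding both groups gives
\begin{align*}
\EE[Z_3-Z_2]=d\big(m^2+(1-m)^2+2m-2m^2-2\Var(\omega_0)\big)=d\big(1-2\Var(\omega_0)\big).
\end{align*}
Hence $\EE[Z_3]=3d-2d\Var(\omega_0)$, and so $v''(1)=-4\Var(\omega_0)\EE_1[Z_1]$.

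The main point needing care is the return-to-origin group. One must keep track that only $\omega_0$ is reused while $\omega_{\pm1}$ integrate out independently; the rest is bookkeeping.
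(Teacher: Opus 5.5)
Your proposal is correct and follows the paper's route. Like the paper, you specialise \eqref{vAbleitungen} to $k=2$, $p=1$, use $\EE[Z_2]=2\EE[Z_1]$ to reduce to $\EE_1[Z_3]-3\EE_1[Z_1]$, and evaluate that by enumerating three-step paths using independence of the sites. The only difference is bookkeeping: the paper computes the full law of $Z_3$, whereas you condition on the first two steps to compute the increment $\EE[Z_3-Z_2]=(1-2\Var(\omega_0))\EE[Z_1]$. This isolates the return-to-origin paths as the sole source of correlation, and it gives directly an identity the paper only derives later, in the diffusivity lemma.
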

\begin{proof}
According to Theorem 2.1.2, $v$ is infinitely differentiable on $(0,1]$ and the second derivative of $v$ at $p=1$ equals

\begin{align}\label{lemma3.1C}
    v''(1) 
    &= (-1)^2 2! (\EE_1[Z_{N_1}]-2\EE_1[Z_{N_2}] +\EE_1[Z_{N_3}]) \\ \nonumber
    &= 2(\EE_1[Z_1]-2\EE_1[Z_2]+\EE_1[Z_3])
    = 2(\EE_1[Z_3]-3\EE_1[Z_1]) .
\end{align}
We have
\begin{align*}
    \EE_1[Z_3]
    =3 \PP_1(Z_3=3) + \PP_1(Z_3=1) - \PP_1(Z_3=-1) - 3 \PP_1(Z_3=-3)
\end{align*}
and proceed by computing these probabilities as expectation with respect to the environment law $\mathrm{P}_p$. Since we only consider random variables with distribution not depending on $p$, we omit the subscript. 
Additionally, we write $\Bar{\omega}$ for $\mathrm{E}[\omega_0]$.
Since the random variables $\omega_x$, $x\in\mathbb Z$, are i.i.d., we obtain
\begin{align*}
    \PP(Z_3=3) = \mathrm{E}\left[\omega_0\omega_1\omega_2\right]
    =  \mathrm{E}\left[\omega_0\right]^3 = \Bar{\omega}^3,
\end{align*}
and 
\begin{align*}
    \PP(Z_3=1) & = \mathrm{E}[\omega_0 \omega_1 (1-\omega_2)] + \mathrm{E}\left[\omega_{0}(1-\omega_{1})\omega_{0}\right]
    + \mathrm{E}\left[(1-\omega_0)\omega_{-1}\omega_0\right] \\ \nonumber
    &= \bo^2(1-\Bar{\omega})+ (1-\Bar{\omega})\mathrm{E}[\omega_0^2]+\Bar{\omega}\mathrm{E}[(1-\omega_0)\omega_0].
\end{align*}
The calculation of the other probabilities is carried out in the same way and yields
\begin{align*}
    \PP(Z_3=-1)
    &= \bo(1-\bo)^2+\bo \mathrm{E}[(1-\omega_0)^2]+(1-\bo)\mathrm{E}[\omega_0(1-\omega_0)], \\
    \PP(Z_3=-3)&=(1-\bo)^3.
\end{align*}

It follows, that
\begin{align*}
    \EE[Z_3] =  &3\bo^3
    + \bo^2(1-\Bar{\omega})+ (1-\Bar{\omega})\mathrm{E}[\omega_0^2]+\Bar{\omega}\mathrm{E}[(1-\omega_0)\omega_0] \\ \nonumber
    &- \bo(1-\bo)^2-\bo \mathrm{E}[(1-\omega_0)^2]-(1-\bo)\mathrm{E}[\omega_0(1-\omega_0)]
    - 3(1-\bo)^3.
\end{align*}
Furthermore, we may write
\begin{align*} 
    3\EE[Z_1] 
    &= 3(2\Bar{\omega}-1)
    = 3\left(\bo^3 + \bo^2(1-\Bar{\omega})- \bo(1-\bo)^2 - (1-\bo)^3\right) \\ \nonumber
    &= 3\bo^3
    + 3\bo^2(1-\Bar{\omega})-3
    \bo(1-\bo)^2 - 3(1-\bo)^3,     
\end{align*}
which gives
\begin{align}\label{Lemma3.1E}
    \EE[Z_3]-3\EE[Z_1]
    &= (1-\bo)(\mathrm{E}[\omega_0^2]-\bo^2)
    + \bo (\mathrm{E}[(1-\omega_0)\omega_0]-(1-\bo)\bo)\\ \nonumber
    &\, - \bo(\mathrm{E}[(1-\omega_0)^2]-(1-\bo)^2)
    - (1-\bo)(\mathrm{E}[\omega_0(1-\omega_0)]-\bo(1-\bo)) \\ \nonumber
    &= (1-\bo)\mathrm{Var}(\omega_0)
    - \bo \mathrm{Var}(1-\omega_0) + (2\bo -1)(\mathrm{E}[\omega_0(1-\omega_0)]-\bo(1-\bo)) \\ \nonumber
    &= (1-\bo)\mathrm{Var}(\omega_0)- \bo \mathrm{Var}(\omega_0) - (2\bo-1)\mathrm{Var}(\omega_0) \\ \nonumber
    &= 2(1-2\bo)\mathrm{Var}(\omega_0)
    = -2 \EE[Z_1]\mathrm{Var}(\omega_0).
\end{align}
Substituting into (\ref{lemma3.1C}) yields 
\begin{align*}
    v''(1)
    = 2(\EE[Z_3]-3\EE[Z_1])
    = -4\mathrm{Var}(\omega_0)\EE[Z_1].
\end{align*}
\end{proof}

\subsection{Proof of Proposition \ref{prop:mainspeed2}, $v'(0) = \infty$}

When $\mathrm{E}_p[\log \rho_0]<0$, but $\mathrm{E}_p[\rho_0]\geq 1$, the random walk in static random environment is transient to the right, but with zero speed, $v(0)=0$. The derivative of the speed at $p=0$ is then given by 
\begin{align}\label{4a}
    v'(0) = \lim_{p\to 0}\frac{v(p)-v(0)}{p}
    = \lim_{p\to 0} \frac{v(p)}{p}
    = \lim_{p\to 0}\EE_p[X_{\tau_1}].
\end{align}
The main part of this proof will be to show that the expectation of the static random walk $\EE_p[Z_m]$ goes to infinity if $m\to\infty$, see Lemma \ref{lem:erwnachrechts} below. 
We remark that if the random walk in static random environment is transient to the right with positive speed, the convergence of $\EE_p[Z_m]$ to $\infty$ follows directly from dominated convergence, as $v(0)=\lim_{m\to\infty}\frac{1}{m}\EE_p[Z_m]>0$.

Since $\PP_p(\tau_1\leq m)\to 0$ as $p\to0$, the statement of  Lemma \ref{lem:erwnachrechts} implies
\begin{align*}
    \EE_p[X_{\tau_1}] 
    =\sum_{m=1}^\infty \EE_p[Z_m]\PP_p(\tau_1=m) \xrightarrow[p\to 0]{} \infty ,
\end{align*}
and thus the proposition follows.

\begin{lem} \label{lem:erwnachrechts}
If the distribution of the environment satisfies conditions (\ref{con1}), (\ref{con2}) and (\ref{con0}), we have
$\lim_{m\to\infty}\EE_p[Z_m]=\infty$. 
\end{lem}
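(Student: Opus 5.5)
We must show $\EE_p[Z_m]\to\infty$ for the static walk, which is transient to the right but has zero speed. The natural route goes through the hitting times $T_n=\inf\{k: Z_k=n\}$. The plan is to split $Z_m$ according to whether level $n$ has been reached by time $m$, and to control the negative part separately:
\begin{align*}
\EE[Z_m]\;\ge\; n\,\PP(T_n\le m) \;-\; \EE\big[(\inf_{k\ge 0} Z_k)^-\big] \;-\; \EE\big[(n - \max_{k\le m}Z_k)\mathbbm{1}_{\{T_n\le m\}}\cdot 0\big].
\end{align*}
More carefully, I would use the decomposition
\[
Z_m = Z_m \mathbbm{1}_{\{T_n\le m\}} + Z_m\mathbbm{1}_{\{T_n>m\}}.
\]
On $\{T_n\le m\}$ we have $Z_m\ge n + \inf_{k\ge T_n}(Z_k-n)$. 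On the complement, $Z_m \ge \inf_{k\ge0}Z_k$. Hence
\[
\EE[Z_m]\ge n\,\PP(T_n\le m) - 2\,\EE\Big[\sup_{y}\big(\text{backtrack from a fresh level}\big)\Big].
\]
So it suffices to prove two facts.

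\textbf{(a) Integrable backtracking.} The annealed law of the maximal backtrack $\sup_{k\ge T_n}(n-Z_k)$ (and of $(\inf_k Z_k)^-$) has a finite mean, uniformly in $n$. This is where \eqref{con1} and \eqref{con2} enter. Given the environment, the quenched probability of ever reaching $n-j$ from $n$ equals
\[
\frac{\sum_{i\ge n}\prod_{l=n}^{i}\rho_l\ \cdots}{\cdots},
\]
which is bounded by a ratio of sums of products of $\rho$'s. The standard estimate (as in Solomon and Zeitouni) bounds it by
\[
\sum_{i\ge n}\prod_{l=n-j+1}^{i}\rho_l,
\]
and there is a subtlety: the environment to the left of $n$ has been explored. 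To avoid this, I would instead bound the backtrack by the potential: $\PP(\text{backtrack}\ge j)\le \EE[\min(1,\sum_i \prod \rho)]$. I would then pick $s\in(0,\varepsilon]$ small, use $\min(1,x)\le x^s$ and subadditivity of $x\mapsto x^s$, and note that $\mathrm E[\rho_0^s]<1$ for small $s$, because $\mathrm E[\log\rho_0]<0$, the moment generating function is finite near 0 by \eqref{con2}, and its derivative at $0$ equals $\mathrm E[\log\rho_0]$ thanks to \eqref{con1}. This gives a geometric tail $c\,\lambda^j$ with $\lambda=\mathrm E[\rho_0^s]<1$, hence finite mean. The dependence issue from the explored environment left of $n$ is handled by running the bound through the fresh environment to the left of $n$ and using i.i.d.\ structure under $\PP$ in the form of the inequality above, together with the Markov property at $T_n$ in the quenched sense.

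\textbf{(b) Finite hitting times.} By transience to the right, $T_n<\infty$ almost surely, so $\PP(T_n\le m)\to1$ as $m\to\infty$ for fixed $n$.

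Combining, $\liminf_m \EE[Z_m]\ge n - C$ for every $n$, which gives the lemma. Note that zero speed plays no role here; only transience and the exponential moment are used.

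The main obstacle is (a): getting a backtracking bound uniform in $n$ despite the walk having already explored and conditioned the environment near $n$. The first thing I would try is to bound by the worst case over the left environment, using the formula for the quenched probability of hitting $n-j$ before returning to $+\infty$. That formula depends only on $\rho_{n-j+1},\dots$, and the walk's history before $T_n$ only biases the sites $<n$. If this fails, I would instead use the quenched expectation of the number of visits and regeneration (ladder) times of the static walk to decouple.
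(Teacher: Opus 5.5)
Your proof is correct, but it controls the negative part of $Z_m$ differently from the paper. The paper splits at the regeneration times $R_n$. After $R_n$ the walk never goes below $Z_{R_n}\ge n$, so the only negative contribution is $\min_{0\le k\le R_1}Z_k$. The paper bounds this by $-\sum_{k\le Z_{R_1}}t_k$ and controls the sum using stationarity of $(t_k)$, Cauchy--Schwarz, a geometric tail for $t_0$ from $P_\omega(T_{-l}<T_1)\le\prod_{j=-l+1}^0\rho_j$, and the exponential moment $\EE[e^{cZ_{R_1}}]<\infty$ imported from Lemma B.3 of Avena et al. You split at the hitting times $T_n$ instead. This leaves two negative terms, $B_n=\sup_{k\ge T_n}(n-Z_k)$ and $B_0=(\inf_k Z_k)^-$, through the pointwise bound $Z_m\ge n\mathbbm{1}_{\{T_n\le m\}}-B_n-B_0$. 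You estimate both directly from $P^n_\omega(T_{n-j}<\infty)\le\sum_{i\ge n}\prod_{l=n-j+1}^{i}\rho_l$, $\min(1,x)\le x^s$ and subadditivity. This gives $\PP(B_n\ge j)\le\lambda^j/(1-\lambda)$ with $\lambda=\mathrm{E}[\rho_0^s]<1$. Your route is more elementary and self-contained: it needs neither the regeneration structure nor the external exponential-moment lemma. It also shows that only $\mathrm{E}[\log\rho_0]<0$, \eqref{con1} and \eqref{con2} are used, not $\mathrm{E}[\rho_0]\ge 1$. Incidentally, your bound on $B_0$ alone would have shortened the paper's argument, since $\min_{0\le k\le R_1}Z_k=\inf_k Z_k$. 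The paper's decomposition has a single bad term, but pays for it with heavier machinery.

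The ``main obstacle'' you identify is not actually one, and the workarounds you sketch (worst case over the left environment, a ``fresh'' environment, ladder times) are unnecessary. Under $P_\omega$ the walk is a time-homogeneous Markov chain, and $T_n<\infty$ almost surely by transience. The strong Markov property therefore gives $P_\omega(B_n\ge j)=P^n_\omega(T_{n-j}<\infty)$, a function of $\omega$ alone. Integrating over the environment law and using shift invariance yields $\PP(B_n\ge j)=\PP(B_0\ge j)$, so uniformity in $n$ comes for free. Because $B_n\ge 0$, you only need $\EE[B_n\mathbbm{1}_{\{T_n\le m\}}]\le\EE[B_n]$. You never condition on the path before $T_n$, so the exploration of the environment left of $n$ plays no role. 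In a write-up, drop the first (garbled) display and the incomplete quenched formula, and state the pointwise inequality above directly.
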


\begin{proof}
Similar to the proof of Theorem \ref{thm_mainspeed}, we omit the subscript $p$ in the proof, since the distribution of $Z_m$ is independent of the renewal parameter. 
As mentioned above, the RWRE $(Z_n)_{n\geq 0}$ is transient to the right. Almost surely, this implies the existence of a sequence $(R_k)_{k\in\mathbb N}$ of \textit{regeneration times} (or \textit{renewal times}), satisfying for every $k\in\mathbb N$
\begin{align*}
Z_l<Z_{R_k}\quad\text{for all }l<R_k,
\qquad
Z_m\ge Z_{R_k}\quad\text{for all }m\ge R_k.
\end{align*}
Thus, at time $R_k$, the random walk reaches a new point $Z_{R_k}$ and then never visits points to the left of $Z_{R_k}$ again. 
The formal definition is
\begin{align*}
    R_1 \coloneqq \inf\{n> 0 \mid \max_{l<n}Z_l < Z_n \leq \min_{m\geq n}Z_m \}
\end{align*}
and for $k\geq 2$
\begin{align*}
    R_k \coloneqq \inf \{n>R_{k-1} \mid \max_{l<n} Z_l < Z_n \leq \min_{m\geq n}Z_m\}.
\end{align*}
For more details, especially for a proof of the existence of these regeneration times we refer to \cite{sznitman1999law}, who considered the renewal structure in a multidimensional context. 
Based on proofs in \cite{guo2019berry,sznitman2004topics}, \cite{avena2025gaussian} (see Lemma B.3 therein) have formulated that under our assumptions on the distribution of the environment there exists a constant $c>0$ such that 
\begin{align}\label{eq:regtimeexpmoment}
\EE[e^{cZ_{R_1}}]<\infty .
\end{align}
By construction, $Z_{R_n}\geq n$, so that for all $m\in\mathbb N$ and $n\in \mathbb N$, 
\begin{align}\label{4b}
    \EE[Z_m] \nonumber 
    &= \EE[Z_m\mathbbm{1}_{\{m< R_n\}}]+\EE[Z_m\mathbbm{1}_{\{m\geq R_n\}}] \\ \nonumber
    &\geq \EE\left[\min_{0\leq k \leq R_1}Z_k\right] + \EE[Z_{R_n}\mathbbm{1}_{\{m\geq R_n\}}] \\ \nonumber
    &\geq \EE\left[\min_{0\leq k \leq R_1}Z_k\right] + \EE[n\mathbbm{1}_{\{m\geq R_n\}}] \\ 
    &=\EE\left[\min_{0\leq k \leq R_1}Z_k\right] + n\PP(m\geq R_n).
\end{align}
We show that the first summand in (\ref{4b}) is finite. For this purpose, we define random variables $t_k, k\in\mathbb N_0$ which measure the maximal downward displacement of the random walk between the first visit of $k$ and the first visit of $k+1$. Formally, we define 
\begin{align*}
    T_k &\coloneqq \inf\{n\geq 0 \mid Z_n = k\}, k\in \mathbb Z, \\
    t_0 &\coloneqq - \min \{Z_n \mid 0 \leq n\leq T_1\}, \\
    t_k &\coloneqq k-\min\{Z_n \mid T_{k}\leq n\leq T_{k+1}\}, k\in\mathbb N.
\end{align*}
The shift-invariance of the environment distribution and the strong Markov property imply that the sequence $(t_k)_{k\geq 0}$ is stationary under $\PP$. 
We obtain
\begin{align}\label{4c1}
    \EE\left[\min_{0\leq k \leq R_1}Z_k\right] 
    \geq \EE\left[\sum_{k=0}^{Z_{R_1}} -t_k\right]
    = -\sum_{k=0}^\infty \EE\left[t_k \mathbbm{1}_{\{k\leq Z_{R_1}\}}\right].
\end{align}
By the Cauchy-Schwarz inequality, we have
\begin{align}\label{4c}
    \sum_{k=0}^\infty \EE\left[t_k \mathbbm{1}_{\{k\leq Z_{R_1}\}}\right]
    \leq \sum_{k=0}^\infty \EE[t_k^2]^{\frac{1}{2}}\cdot \PP(k\leq Z_{R_1})^{\frac{1}{2}}
    =\EE[t_0^2]^{\frac{1}{2}}\cdot \sum_{k=0}^\infty\PP(k\leq Z_{R_1})^{\frac{1}{2}}.
\end{align}
The finite exponential moment \eqref{eq:regtimeexpmoment} implies that the sum in \eqref{4c} is bounded by some $C<\infty$, so that
\begin{align*}
\EE\left[\min_{0\leq k \leq R_1}Z_k\right] \geq - C \EE[t_0^2]^{\frac{1}{2}}. 
\end{align*}
To show that $\EE[t_0^2]<\infty$, we use explicit formulas for hitting probabilities under the quenched law $P_\omega$. 
For $l\geq 1$ the event $\{t_0 = l\}$ implies that the random walk hits $-l$ before $1$, so the hitting-time $T_{-l}$ has to be smaller than $T_1$. In any environment $\omega$ with $|\log \rho_x|<\infty$ for all $x\in\mathbb Z$, this probability is given by
\begin{align*}
    P_\omega(T_{-l}<T_1)=\frac{1}{1+\sum_{i=-l+1}^0\left(\prod_{j=i}^0\rho_j^{-1}\right)} \leq \prod_{j=-l+1}^0\rho_j,
\end{align*}
see \cite[Equation 2.1.4]{zeitouni2004RWRE}. Since the probability is also bounded by 1, we have for any $\alpha\in[0,1]$
\begin{align*}
    P_\omega(T_{-l}<T_1)
    \leq \min\left\{1, \prod_{j=-l+1}^0\rho_j\right\}
    \leq \min\left\{1, \left( \prod_{j=-l+1}^0\rho_j\right)^\alpha \right\} 
    \leq \prod_{j=-l+1}^0\rho_j^\alpha.
\end{align*}
Combining these considerations we arrive at
\begin{align*}
    \PP(t_0=l)
    \leq \PP(T_{-l}<T_1)
    \leq \mathrm{E}\left[\prod_{j=-l+1}^0\rho_j^\alpha\right]
    =E_P \left[\rho_0^\alpha\right]^l
\end{align*}
for all $\alpha\in[0,1]$. Differentiating with respect to $\alpha$ yields
\begin{align}\label{4d}
    \frac{d}{d\alpha} \mathrm{E} \left[\rho_0^\alpha\right]\Big|_{\alpha=0}
    = \mathrm{E}\left[\frac{d}{d\alpha}\rho_0^\alpha \Big|_{\alpha=0}\right]
    = \mathrm{E}\left[\rho_0^\alpha \log \rho_0\Big|_{\alpha=0}\right]
    = \mathrm{E}[\log \rho_0 ]<0 , 
\end{align}
where the first identity holds because
\begin{align*}
    \left|\frac{d}{d\alpha} \rho_0^\alpha \right| = |\rho_0^\alpha \log \rho_0|
    \leq |\log \rho_0|\mathbbm{1}_{\{\rho_0\leq 1\}} + \rho_0^\varepsilon \frac{\varepsilon}{2}  \mathbbm{1}_{\{\rho_0 > 1\}} 
\end{align*}
for $0\leq \alpha \leq \varepsilon/2$, and the upper bound is integrable due to the assumptions (\ref{con1}) and (\ref{con2}).

As $\mathrm{E}[\rho_0^0]=1$, the inequality \eqref{4d} implies that there exists $\alpha^*\in(0,1)$ with $E_P[\rho_0^{\alpha^*}]<1$. Choosing such $\alpha^*$, 
\begin{align*}
    \EE\left[t_0^2\right]
    = \sum_{l=1}^\infty l^2 \PP(t_0=l)
    \leq \sum_{l=1}^\infty l^2 E_P[\rho_0^{\alpha^*}]^l
    <\infty , 
\end{align*}
which completes the proof that 
\begin{align*}
\EE\left[\min_{0\leq k \leq R_1}Z_k\right] >-\infty ,
\end{align*}
so the first summand in (\ref{4b}) is finite.

Thus, we can conclude from \eqref{4b} that 
\begin{align}
    \liminf_{m\to\infty} \EE[Z_m]
    \geq \EE\left[\min_{0\leq k \leq R_1}Z_k\right] + \liminf_{m\to\infty} n\PP(m\geq R_n)
    = \EE\left[\min_{0\leq k \leq R_1}Z_k\right] + n
\end{align}
for all $n\in \NN$.
This implies $\EE_p[Z_m]\xrightarrow[m\to\infty]{}\infty$, which completes the proof.
\end{proof}

\subsection{Proof of Theorem \ref{thm:mainvar} 1., differentiability of the  diffusivity}

Using the same conditioning as for the speed, we derive the following power series representation of the diffusivity:
\begin{align}\label{sigmapowerseries}
    \sigma^2 (p) 
    &= p\EE_p[(X_{\tau_1}-\tau_1v(p))^2]
    = p \sum_{m=1}^\infty \EE_p[(X_{\tau_1}-\tau_1 v(p))^2 | \tau_1=m]\cdot \PP_p(\tau_1=m) \nonumber \\ 
    &= p^2 \sum_{m=1}^\infty \EE_p\left[(Z_{m}-m v(p))^2\right] (1-p)^{m-1}.
\end{align}
We have $(Z_m-mv(p))^2\leq 4m^2$, so the series in \eqref{sigmapowerseries} converges absolutely for $p\in (0,2)$. 
Since $v$ is infinitely differentiable on $(0,1]$ by Theorem \ref{thm_mainspeed}, \eqref{sigmapowerseries} is infinitely differentiable on $(0,1]$. 
Hence, $\sigma^2$ is continuous and infinitely differentiable on $(0,1]$.

\subsection{Proof of Theorem \ref{thm:mainvar} 2., the derivative at $p=1$}

The lemma below yields $(\sigma^2)'(1)=0$ and $(\sigma^2)''(1) > 0$ if $\EE[Z_1] \neq 0$ and $\Var(\omega_0)>0$, so the diffusivity has a strict local minimum at $p=1$. As before, we omit the subscript $p$ in any expectation that does not depend on $p$.

\begin{lem}
The first and second derivatives of $\sigma^2$ at $p=1$ are given by
\begin{align*}
    (\sigma^2)'(1) &= 0, \\ 
    (\sigma^2)''(1) &= 24\EE[Z_1]^2 \Var \left(\omega_0\right).
\end{align*}

\begin{proof}
We use the notation
\begin{align*}
    a_m(p) \coloneqq 
    \EE\left[(Z_{m}-m v(p))^2\right].
\end{align*}
The first and second derivatives of this expression are then
\begin{align*}
    a_m'(p) &= -2mv'(p)(\EE[Z_m]-mv(p)), \\ \nonumber
    a_m''(p) &= -2mv''(p)(\EE[Z_m]-mv(p))+2m^2v'(p)^2.
\end{align*}
We have $v(1) = \EE[Z_1]$, $\EE[Z_2]=2\EE[Z_1]$ and $v'(1) = 0$, see \eqref{eq:versteableitung}, so that
\begin{align*}
    a_1(1) &= \EE\left[(Z_{1}- v(1))^2\right] 
    = \Var(Z_1)
    , \\ \nonumber
    a_1'(1) &= -2v'(1)(\EE[Z_1]-v(1)) = 0    , \\ \nonumber
    a_1''(1) &= -2v''(1)(\EE[Z_1]-v(1))+2v'(1)^2 = 0, \\ \nonumber
    a_2(1) &= \EE\left[(Z_{2}- 2v(1))^2\right] = \EE\left[(Z_{2}- \EE[Z_2])^2\right]
    = \Var(Z_2), \\ \nonumber
    a_2'(1) &= -4v'(1)(\EE[Z_2]-2v(1))= 0, \\ \nonumber 
    a_3(1) &=     \EE\left[(Z_{3}-3 v(1))^2\right] = \EE\left[(Z_{3}-3 \EE[Z_1])^2\right].
\end{align*}
We obtain the derivatives of $\sigma^2$ by differentiating the summands of the power series in (\ref{sigmapowerseries}):
\begin{align*}
(\sigma^2)'(p) 
&= \sum_{m=1}^\infty (1-p)^{m-1} \left(2pa_m(p)+p^2 a_m'(p) \right) 
- \sum_{m=2}^\infty(m-1)(1-p)^{m-2}p^2a_m(p) 
\end{align*}
and for the second derivative
\begin{align*} 
(\sigma^2)''(p) 
&= \sum_{m=2}^\infty 
-(m-1)(1-p)^{m-2}\left(2pa_m(p)+p^2a_m'(p)\right) \\ \nonumber
&+\sum_{m=1}^\infty (1-p)^{m-1} \left(2a_m(p)+4pa_m'(p)+p^2a_m''(p)\right)  \\ \nonumber
&+\sum_{m=3}^\infty(m-1)(m-2)(1-p)^{m-3}p^2a_m(p) \\ \nonumber
&- \sum_{m=2}^\infty (m-1)(1-p)^{m-2}\left(2pa_m(p)+p^2a_m'(p)\right).
\end{align*}
Evaluating the first derivative of the diffusivity at $p=1$ yields
\begin{align*}
    (\sigma^2)'(1) 
    = 2a_1(1) + a_1'(1) - a_2(1) = 2\Var(Z_1) - \Var(Z_2) = 0,
\end{align*}
since $Z_1$ and $Z_2-Z_1$ are i.i.d. under the annealed law $\PP_p$. 
For the second derivative of the diffusivity at $p=1$ we obtain
\begin{align} \label{3G}
    (\sigma^2)''(1) 
    &= -2a_2(1) - a_2'(1) 
    + 2a_1(1) + 4a_1'(1) + a_1''(1)
    + 2a_3(1) 
    - 2a_2(1) - a_2'(1) \nonumber  \\ \nonumber
    &= 2a_1(1) + 4a'_1(1)  + a''_1(1) - 4 a_2(1)   - 2a'_2(1) + 2a_3(1) \\ \nonumber 
	& = 2\Var(Z_1)-4\Var(Z_2) +2\EE\left[(Z_{3}-3 \EE[Z_1])^2\right] \\ 
    &= -6\Var(Z_1) + 2\EE\left[(Z_{3}-3 \EE[Z_1])^2\right].
\end{align}
We continue with the last summand and obtain:
\begin{align}\label{3H}
    \EE\left[(Z_{3}-3 \EE[Z_1])^2\right]
    &= \EE\left[\left((Z_{3}-\EE[Z_3])+(\EE[Z_3]-3 \EE[Z_1])\right)^2\right] \nonumber \\ \nonumber
    &= \Var(Z_3) + 2 \EE[(Z_{3}-\EE[Z_3])(\EE[Z_3]-3 \EE[Z_1])] + (\EE[Z_3]-3 \EE[Z_1])^2 \\ 
    &= \Var(Z_3) + (\EE[Z_3]-3 \EE[Z_1])^2.
\end{align}
In order to comupte $\Var(Z_3)$, we write $Z_3=Z_2+\Delta_3$ with $\Delta_3 = Z_3-Z_2$. We have
\begin{align*}
    \Var(Z_3) 
    &= \Var(Z_2 + \Delta_3)
    = \Var(Z_2) + \Var(\Delta_3) + 2\text{Cov}(Z_2, \Delta_3).
\end{align*}
Next, we consider the expectation of $Z_2\Delta_3$. As $Z_2 \in \{-2,0,2\}$ and $\Delta_3\in \{-1,1\}$ $\PP_p$-a.s., we obtain:
\begin{align*}
    \EE[Z_2\Delta_3]
    =&\ 2(\bo^3 + (1-\bo)^3)
    - 2((1-\bo)^2\bo + \bo^2(1-\bo)) \\ \nonumber
    =&\ 2\bo^2(2\bo-1) + 2(1-\bo)^2(1-2\bo) \\ \nonumber
    =&\ (2\bo^2-2(1-\bo)^2)\EE[Z_1]
    = (4\bo-2)\EE[Z_1]
    = 2\EE[Z_1]^2.
\end{align*}
Furthermore, we have
\begin{align*}
    \EE[\Delta_3] 
    &= \EE[Z_3]-\EE[Z_2]
    = (\EE[Z_3] - 3\EE[Z_1])+\EE[Z_1] \\ \nonumber   &=-2\EE[Z_1]\mathrm{Var}(\omega_0)+\EE[Z_1]
    = (1-2\Var(\omega_0))\EE[Z_1],
\end{align*}
where we used (\ref{Lemma3.1E}) for the third step. 
We arrive at 
\begin{align*}
    \text{Cov}(Z_2, \Delta_3) 
    &= \EE[Z_2\Delta_3] - \EE[Z_2]\EE[\Delta_3] \\ \nonumber
    &= 2\EE[Z_1]^2 - 2\EE[Z_1]^2(1-2\Var(\omega_0))
    = 4\Var(\omega_0)\EE[Z_1]^2
\end{align*}
and
\begin{align*}
    \Var(\Delta_3) 
    &= \EE[\Delta_3^2]-\EE[\Delta_3]^2
    = 1 - ((1-2\Var(\omega_0))\EE[Z_1])^2 \\ \nonumber
    &= 1 - (1-4\Var(\omega_0)+4\Var(\omega_0)^2)\EE[Z_1]^2
    \\ \nonumber
    &= 1-\EE[Z_1]^2+4\EE[Z_1]^2(\Var(\omega_0)-\Var(\omega_0)^2) \\ \nonumber
    &= \Var(Z_1) +4\EE[Z_1]^2(\Var(\omega_0)-\Var(\omega_0)^2).
\end{align*}
The variance of $Z_3$ is then given by 
\begin{align}\label{3I}
    \Var(Z_3) 
    &= 3\Var(Z_1) +4\EE[Z_1]^2(\Var(\omega_0)-\Var(\omega_0)^2) + 8\Var(\omega_0)\EE[Z_1]^2\\ \nonumber
    &= 3\Var(Z_1) +4\EE[Z_1]^2(3\Var(\omega_0)-\Var(\omega_0)^2).
\end{align}
Furthermore, we have by (\ref{Lemma3.1E})
\begin{align}\label{3J}
    (\EE[Z_3]-3 \EE[Z_1])^2
    = (-2\EE[Z_1]\Var(\omega_0))^2
    = 4\EE[Z_1]^2\Var(\omega_0)^2.
\end{align}
Combining now (\ref{3G}), (\ref{3H}), (\ref{3I}) and (\ref{3J}), we obtain 
\begin{align*}
    (\sigma^2)''(1)  
    &= -6\Var(Z_1) + 2\EE\left[(Z_{3}-3 \EE[Z_1])^2\right] \\ \nonumber
    &=-6\Var(Z_1) + 2\Var(Z_3) + 2 \left(\EE[Z_3]-3 \EE[Z_1]\right)^2 \\ \nonumber
    &= 8 \EE[Z_1]^2\left(3\Var(\omega_0)-\Var(\omega_0)^2\right) + 8 \EE[Z_1]^2\Var(\omega_0)^2 \\ \nonumber
    &= 24\EE[Z_1]^2 \Var(\omega_0).
\end{align*}
\end{proof}
\end{lem}

\bibliographystyle{alpha}
\bibliography{bibRWDRE}

\bigskip

{\footnotesize
\noindent
TU Dortmund, \\
Fakult\"at f\"ur Mathematik, \\
Vogelpothsweg 87, 
44227 Dortmund, 
Germany, \\
raphael.martin@tu-dortmund.de\\
jan.nagel@tu-dortmund.de
}

\end{document}